\newtheorem{theorem}{Theorem}[section]
\newtheorem{algorithm}{Algorithm}[section]
\newtheorem{remark}{Remark}[section]
\newenvironment{proof}[1]{\noindent{\em Proof{#1}:}}{\hfill $\square$ \\}
\begin{document}
\pagestyle{myheadings}

\title{{\bf Resource Allocation Based on Past Incident Patterns}}
\author{{\bf M.N.M.\ van Lieshout}\\[0.1in]
Centrum Wiskunde \& Informatica \\
P.O.\ Box 94079, NL-1090 GB, Amsterdam, The Netherlands\\[0.1in]
Department of Applied Mathematics, University of Twente \\
P.O.\ Box 217, NL-7500 AE, Enschede, The Netherlands}
\date{}

\maketitle

\begin{center}
    {\bf Dedicated to Professor J.\ Glaz}
\end{center}

\noindent
{\bf Abstract:} 
We formulate and solve two resource allocation problems
motivated by a preparedness question of emergency response services. 
First, we consider the assignment of vehicles to stations, and, in a 
second step, assign crews to vehicles. In both cases, we work in a 
minimax framework and define the objective function for a spatial 
catchment area as the total risk in this area per resource unit 
allocated to it. The solutions are explicit and can be calculated in 
practice by a greedy algorithm that successively allocates a resource 
unit to an area having maximal relative risk, with suitable tie breaker 
rules. The approach is illustrated on a data set of incidents reported
to the Twente Fire Brigade.  \\[0.1in]

\noindent
{\em AMS Mathematics Subject Classification (2020 Revision):}
60G55, 90C10, 91B32.
\\
\noindent
{\em Key words \& Phrases:} 
Capacity planning, discrete optimisation, intensity function, point process.

\section{Introduction}

Consider the following motivating capacity planning problem
encountered by emergency response services such as the ambulance 
or fire and rescue brigade. In the Netherlands, these services are 
organised regionally. In each region, there are a number of stations, 
each equipped with at least one resource, for example, an ambulance, a
fire truck or a crew. Note that the regions are heterogeneous in terms of 
risk. For example, densely populated areas may need more ambulances, 
whereas woodland regions are more susceptible to wildfires. Thus,
the goal of capacity planners is to distribute the available resources 
as well as possible over the stations taking into account the total
risk in the stations' catchment areas.  

Solving problems such as those outlined above relies on two key tools: 
a reliable estimator of the risk based on historic incident occurrence
data and practical algorithms for solving integer-valued optimisation 
problems.

Non-parametric estimation of the expected number of incidents
in a given region and time window is an old problem. At the most elementary
level, the raw counts provide an unbiased estimator \citep{DuRi29}. 
However, the resulting risk map depends heavily on the region 
boundaries and, since it lacks smoothness, is not visually pleasing.
The first problem can be overcome by using the Voronoi or Delaunay 
cells of the observed incidents 
\citep{BarrScho10,Lies12,Moraetal19,Ord78,SchaWeyg00} but without 
further smoothing the resulting map remains very spiky. Thus, 
the most widely used approach remains kernel smoothing \citep{Digg85}. 
The amount of smoothing is governed by a bandwidth parameter, the 
selection of which has recently attracted quite some attention 
\citep{CronLies18,Lies20,Lo17}. Allowing the bandwidth to vary over 
space is advantageous when there are large regional differences 
\citep{Abra82,Davietal18,Lies21}. When covariate information is
available, kernel smoothing can be performed in the covariate domain
\citep{Gua08}, at least when the dimension is moderate. For larger
dimensions, machine learning approaches may give very accurate
estimators \citep{BiscLava25,Kimetal22,Luetal25} albeit due to 
the many tuning parameters that need to be set, e.g., using 
cross-validation \citep{Cronetal23}, the computational cost may be high.

\citet{Koop53} seems to have been the first to formalise a resource
allocation problem. In its generic form, the goal is to minimise the 
sum of a convex cost function on the non-negative integers over a finite
number of activities when the number of available resources is bounded.
This problem can be solved by a greedy algorithm \citep{Gros56}. 
In computational terms, the algorithm is not optimal. Hence, 
some effort has been spent on finding faster, polynomial-time
ones \citep{FredJohn82,Grotetal93}. Over the years, various 
generalisations have been considered. For instance, the assumption 
that the objective function is the sum of marginal contributions may 
not be appropriate when there is interaction between the activities. 
When a more even distribution of resources is desirable, working in 
a minimax framework might be more appropriate \citep{KleiLuss91,Luss92}. 
Furthermore, the convexity assumption may be relaxed \citep{Muro98}. 
In a different direction, there may be additional constraints, e.g., 
because different activities require different resources, an activity 
may need more than one resource, or, reversely, cannot take more than
a given number of resources \citep{Fuji05}. For a thorough discussion, 
see \citet{Katoetal25} or \citet{Schr03}.

The plan of this paper is as follows. In Section~\ref{S:intensity},
we fix notation and discuss how to estimate the intensity function 
of the incident process. Section~\ref{S:allocation} discusses the 
state of the art in integer programming with particular attention
to resource allocation. In Section~\ref{S:capacity}, we formulate and
solve a capacity planning problem for vehicles. Having found an
optimal vehicle allocation, Section~\ref{S:crew} turns to the 
assignment of crews to vehicles. To demonstrate the efficacy of 
the algorithms in practice, we illustrate the approach on a data 
set supplied by the Twente Fire Brigade in Section~\ref{S:Twente}.
The paper closes with a discussion and suggestions for future research.

\section{Estimating the incident intensity function}
\label{S:intensity}

Let $\mathcal{S} \subset \mathbb{R}^2$ be a finite set. In a security
context, an element $s\in \mathcal{S}$ could be the location of 
a hospital from which ambulances are dispatched or the position of 
a fire station. To estimate the risk in the catchment areas $C(s)$
of $s$, assume that we have at our disposal a realisation of the 
simple point process $\Psi$ of incidents 
in a non-empty, bounded and open window $W \subset \mathbb{R}^2$ in a 
fixed time interval. Then, for any Borel set $B\subset W$, the first moment 
measure $\Lambda(B)$ is defined as the expected number of incidents that 
occur in $B$. We will assume that there exists a non-negative integrable 
function $\lambda: \mathbb{R}^2 \to \mathbb{R}^+$ such that 
\[
\Lambda(B) = \int_B \lambda(u) du.
\]
The function $\lambda$ is called the intensity function of $\Psi$.
Hence, to estimate $\Lambda( C(s) )$, it suffices to estimate $\lambda$.
For measure-theoretic details, we refer the reader to \citet{SKM}.

To estimate $\lambda$, since the catchment areas of emergency response 
services may be quite heterogeneous, we use an adaptive kernel estimator 
\citep{Abra82,Davietal18,Lies21} defined for $x_0 \in W$ as
\begin{equation}
\label{e:Abramson}
\widehat\lambda( x_0; h, \Psi, W )  =
  \sum_{y\in \Psi\cap W}  \frac{1}{c(y)^d h^d} \kappa\left(
    \frac{x_0-u}{h c(u) } \right) w(u, h, W)^{-1},
\end{equation}
where
\begin{equation}
\label{e:Abra-c}
c(y) = \left(
  \frac{\lambda(y)}{ \prod_{z\in\Psi\cap W} \lambda(z)^{1/N(\Psi\cap W)} }
\right)^{-1/2},
\end{equation}
$N(\Psi\cap W)$ denotes the number of points of $\Psi$ that fall in $W$,
$\kappa$ is the Gaussian kernel, $h>0$ is the bandwidth
and $w(y,h, W)$ is an edge correction weight. When it is undesirable
that mass of $\kappa$ leaks away to the complement of $W$, global
edge correction \citep{Lies12} is appropriate:
\[
w(u,h,W) =  \int_W \frac{1}{c(u)^d h^d} \kappa\left(
    \frac{v-u}{h c(u) } \right) dv.
\]
Note that the local bandwidth $h c(u)$ is larger for $u$ in 
regions with a low incident rate, thus smoothing more when few
data points are available. Also observe that, as it depends on 
the unknown true intensity function $\lambda$, (\ref{e:Abramson}) 
cannot be calculated. A common solution is to estimate $c(u)$ by 
plugging in a so-called pilot intensity estimator, that is, a 
rough guess of $\lambda$. For example, one could estimate
$\lambda(y)$ by a global bandwidth kernel estimator (taking
$c(y)\equiv 1$ in expression (\ref{e:Abramson})).

To select the bandwidths, two main approaches are available: 
cross-vali\-dation \citep{Load99} and the CvL method 
\citep{CronLies18,Lies24}. The former method maximises the 
leave-one-out cross-vali\-dation log likelihood
\begin{equation}
\label{e:bwPPL}
L(h;\Psi, W)=
\sum_{x\in\Psi\cap W} 
  \log \widehat \lambda(x; h, \Psi\setminus \{ x \}, W)
- \int_W \widehat \lambda( u; h, \Psi, W) \, du
\end{equation}
over $h$. For the CvL method, estimate the area $\ell(W)$
of $W$ by
\[
\label{e:HamFun}
T(h;\Psi, W) = \left\{ \begin{array}{ll}
  \displaystyle
  \sum_{x\in\Psi\cap W} \frac{1}{\widehat\lambda(x;h, \Psi, W)},
& \quad \Psi\cap W \neq \emptyset, \\
\ell(W), & \quad \mbox{ otherwise},
\end{array} \right.
\]
and choose bandwidth $h>0$ by minimising the absolute error
\begin{equation}
\label{e:DefHam}
F(h;\Psi, W) =
\left| T(h;\Psi, W) -\ell(W) \right|.
\end{equation}
Which of the two techniques leads to smaller errors depends
on the data at hand. The cross-validation method is 
computationally more expensive and tends to select
smaller bandwidths than CvL; the latter is cheaper to calculate but
tends to over-smooth for repulsive patterns. 

\section{Resource allocation}
\label{S:allocation}

Write $K \in \mathbb{N}$ for the available number of resources and let
$f_s: \mathbb{N}_0 \to \mathbb{R}$ be a convex cost function for 
each $s\in \mathcal{S}$. The classic resource allocation problem aims 
to minimise 
\[
\sum_{s\in\mathcal{S}} f_s( n(s) )
\]
subject to 
\[
 \sum_{s\in\mathcal{S}} n(s) = K
\]
and, for all $s\in\mathcal{S}$, the number of resources assigned to 
$s$ takes non-negative integer values, i.e., $n(s)\in \mathbb{N}_0$.
For this problem, it is well-known that an optimal allocation can be 
calculated by the following greedy algorithm.
Start with $n(s) = 0$ and iteratively allocate one unit of the
resource to that activity that minimises the increase in the 
current objective value until $K$ resources have been allocated.
Formally, write, for $i=1, \dots, K$, 
\[
d_s(i) =  f_s(i) - f_s(i-1) 
\]
and let $V$ be the set of the $K$ smallest elements in 
$\{ d_s(i): s\in\mathcal{S}, i = 1, \dots, K \}$. In case of
ties,
if $d_s(i-1) = d_s(i)$,  preference is given to $d_s(i-1)$. Then 
\[
n^*(s) = \left\{ \begin{array}{ll}
0, & \mbox{ if } d_s(1) \not \in V, \\
K, & \mbox{ if } d_s(K) \in V, \\
i, & \mbox{ if } d_s(i) \in V \mbox{ and } d_s(i+1) \not \in V,
\end{array}
\right.
\]
is an optimal solution \cite[Theorem 1]{Katoetal25}. The approach
is efficient from a computational point of view when $K$ is moderate.
Note that a minimisation problem can be turned into a maximisation
problem by negating of functions. Although not immediately obvious,
also minimax problems can be transformed to fit the framework 
discussed above
by considering cumulative sums. Indeed, suppose that $f_s$ are 
non-decreasing non-negative functions and write
\[
g_s(i) = \sum_{j=0}^i f_s(j), \quad i = 0, 1, \dots.
\]
Then, by Theorem~9  in 
\citet{Katoetal25}, any optimal solution to
\[
\sum_{s\in \mathcal{S}} g_s(n(s))
\]
such that $\sum_s n(s) = K$ and $n(s) \in \mathbb{N}_0$,
$s\in \mathcal{S}$,
is also an optimal solution to the minimax problem
\[
\min\max_{s\in \mathcal{S}} f_s(n(s))
\]
subject to the same constraints.

\section{Allocation of vehicles}
\label{S:capacity}

We first consider the allocation of vehicles such as ambulances or
fire trucks to stations from which they are dispatched to incidents. We 
assume that each station $s \in \mathcal{S}$ serves a catchment area 
$C(s) \subset W$ and is equipped with at least one vehicle.
We suppose that the $C(s)$ are Borel sets and form a partition 
of $W$. Given an estimator $\widehat \lambda$ of the incidents, set
\[
\Lambda(s) = \int_{C(s)} \widehat \lambda(s) \, ds
\]
for the estimated total risk in $C(s)$. Write $n(s)$ for the number 
of vehicles operating from station $s$, and suppose that $K$ vehicles 
are available in total. Then the average risk per vehicle for station 
$s$ is ${\Lambda(s)}/{n(s)}$. As one would like to have a small average 
risk for all stations, a suitable problem formulation is as follows. Find 
\begin{equation}
\min\max_{s \in \mathcal{S}} \frac{\Lambda(s)}{n(s)}
\label{e:vehicles}
\end{equation}over all $n: \mathcal{S} \to \mathbb{N}$ such that 
$\sum_{s\in\mathcal{S}} n(s) = K$, where the constant
$K$ satisfies $K \geq |\mathcal{S}|$.  Consequently, for all $s$,
$1\leq n(s) \leq K$ so that every station is equipped with
at least one resource and no station can have more than the
available resources. In fact $n(s) \leq K - |S| + 1$, writing
$|\cdot|$ for cardinality.

Note that the function $\Lambda(s)/n(s)$ is decreasing in
$n(s)$ and that $n(s)$ cannot be zero. The following modification
of the greedy algorithm discussed in Section~\ref{S:allocation} 
can be used to solve~(\ref{e:vehicles}).

\begin{algorithm}
Initialise by allocating one resource unit to every station,
i.e. $n_{|\mathcal{S}|}(s) = 1$ for all $s\in \mathcal{S}$.
At iteration $k$, after $k$ resources have already been 
allocated according to $(n_k(s))_{s\in\mathcal{S}}$, 
add one to a station $s^*$ for which 
\[
s^* = \arg\max_{s\in \mathcal{S}} \frac{\Lambda(s)}{n_k(s)}
\]
to obtain 
\[
\left\{ 
\begin{array}{llll}
    n_{k+1}(s) & = & n_k(s) + 1, & \mbox{ if } s = s^* \\
    n_{k+1}(s) & = & n_k(s), & \mbox{ if } s\neq s^*. \\
\end{array} 
\right.
\]
\label{A:pumper}
\end{algorithm}

Algorithm~\ref{A:pumper} leads to the following 
optimal resource allocation $n(s) := n_K(s)$. Set, for 
$K \geq |\mathcal{S}|$,
\[
V = \left\{ \frac{\Lambda(s)}{1+k} : s \in \mathcal{S}, 
 k \in \{  0, \dots, K-|\mathcal{S}| \} \right\}.
\]
If $K = |\mathcal{S}|$, $n_K(s) = 1$ by definition. Otherwise,
for $K > |\mathcal{S}|$, order the members of $V$ and collect the 
$K-|\mathcal{S}|$ largest ones in $V_K$. Then, for $s\in\mathcal{S}$,
\begin{equation}
n(s) = \left\{ \begin{array}{ll}
    1 &  \mbox{if } \Lambda(s) \not \in V_K, \\
   1 + K-|\mathcal{S}|  & \mbox{if } 
   \frac{\Lambda(s)}{K-|\mathcal{S}|} \in V_K, \\
   k  & \mbox{if } \frac{\Lambda(s)}{k-1} \in V_K,
      \frac{\Lambda(s)}{k} \not \in V_K, \quad k=2, \dots, K-|\mathcal{S}|.
\end{array} \right.
\label{e:pumper}
\end{equation}

We are now ready to prove optimality.

\begin{theorem} \label{t:cars}
Suppose that $K \geq |\mathcal{S}|$ and that $\Lambda(s) > 0$
for all $s\in \mathcal{S}$. Then, Algorithm~\ref{A:pumper} with
$k=K$ returns a decision rule $n(s)$, $s\in \mathcal{S}$, that 
minimises the maximal average risk (\ref{e:vehicles}) and that 
satisfies the constraints  $1\leq n(s)$ for all $s$ and 
$\sum_{s\in\mathcal{S}} n(s) = K$.
\end{theorem}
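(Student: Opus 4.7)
The plan is to prove optimality by a direct exchange argument on the greedy dynamics, rather than appealing to the cumulative-sum reduction at the end of Section~\ref{S:allocation}: that reduction is stated for non-decreasing cost functions, whereas $n \mapsto \Lambda(s)/n$ is decreasing, so it does not apply verbatim. Feasibility is immediate from the construction of the algorithm: the initialisation $n_{|\mathcal{S}|}(s) = 1$ followed by $K - |\mathcal{S}|$ unit increments produces an assignment with $\sum_s n_K(s) = K$ and $n_K(s) \geq 1$; the assumption $\Lambda(s) > 0$ ensures that the argmax step is well-defined at every iteration.

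For optimality, I would let $n^* := n_K$ denote the output of Algorithm~\ref{A:pumper}, write $M^* := \max_s \Lambda(s)/n^*(s)$, fix a station $\tilde s$ achieving this maximum, and argue by contradiction: suppose some feasible $n$ satisfies $\max_s \Lambda(s)/n(s) < M^*$. Evaluating at $\tilde s$ and using $M^* = \Lambda(\tilde s)/n^*(\tilde s)$ forces $n(\tilde s) \geq n^*(\tilde s) + 1$; then, by the budget constraint $\sum_s n(s) = \sum_s n^*(s) = K$, there must exist a station $\bar s$ with $n(\bar s) \leq n^*(\bar s) - 1$. In particular $n^*(\bar s) \geq 2$, so the algorithm augmented $\bar s$ at least once after initialisation.

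The key step is to take $k_1$ to be the \emph{last} iteration at which the algorithm allocated a resource to $\bar s$. Then $n_{k_1}(\bar s) + 1 = n^*(\bar s)$, and the greedy selection rule gives $\Lambda(\bar s)/n_{k_1}(\bar s) \geq \Lambda(\tilde s)/n_{k_1}(\tilde s)$. Since the counts $n_k(\cdot)$ are non-decreasing in $k$, one has $n_{k_1}(\tilde s) \leq n^*(\tilde s)$, so the right-hand side is at least $M^*$. Hence $\Lambda(\bar s)/(n^*(\bar s) - 1) \geq M^*$, and combining with $n(\bar s) \leq n^*(\bar s) - 1$ yields $\Lambda(\bar s)/n(\bar s) \geq M^*$, contradicting the assumed strict inequality. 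The degenerate case $K = |\mathcal{S}|$ should be dispatched separately, as $n(s) \equiv 1$ is then the only feasible allocation. I expect the main subtlety to lie precisely in this choice of $k_1$: picking the \emph{first}, rather than the last, augmentation of $\bar s$ breaks the identity $n_{k_1}(\bar s) + 1 = n^*(\bar s)$ and the chain of inequalities collapses.
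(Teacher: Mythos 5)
Your proof is correct, and it reaches the conclusion by a somewhat different route than the paper. The paper argues by contradiction via a minimal-perturbation argument: it picks an optimal allocation $n$ minimising the $\ell_1$-distance to the greedy output, locates stations $s$, $s'$ with $n_k(s)>n(s)$ and $n_k(s')<n(s')$, and constructs a modified allocation $\tilde n$ (move one unit from $s'$ to $s$) that is no worse yet strictly closer to $n_k$, contradicting minimality. You instead fix a station $\tilde s$ attaining the greedy maximum $M^*$, deduce from a hypothetical strictly better $n$ and the budget constraint the existence of $\bar s$ with $n(\bar s)\le n^*(\bar s)-1$, and then use the greedy selection at the \emph{last} augmentation of $\bar s$ to show $\Lambda(\bar s)/(n^*(\bar s)-1)\ge M^*$, whence $\Lambda(\bar s)/n(\bar s)\ge M^*$ — an immediate contradiction with no auxiliary allocation needed. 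The core inequality is the same in both arguments (the greedy step dominates the value any other station would have offered, combined with monotonicity of $m\mapsto\Lambda(s)/m$), but your scaffolding is leaner: you avoid the ``closest optimum'' device and the verification that $\tilde n$ remains feasible and no worse, at the cost of having to track the algorithm's history (your observation that one must take the last, not the first, augmentation of $\bar s$ is exactly the point where this version could go wrong, and you handle it correctly). Your side remarks are also sound: the reduction via cumulative sums in Section~\ref{S:allocation} is stated for non-decreasing $f_s$ and so does not apply verbatim to the decreasing map $m\mapsto\Lambda(s)/m$, which is presumably why the paper gives a self-contained proof; and the case $K=|\mathcal{S}|$ is in fact already absorbed by your main argument, since the existence of $\bar s$ with $n^*(\bar s)\ge 2$ is impossible there.
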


\begin{proof}{}
As the greedy algorithm is initialised with $n_{|\mathcal{S}|}(s) 
= 1$ for all $s\in\mathcal{S}$ and continues until $K$ resources 
have been allocated, the constraints hold. 

The optimality proof proceeds by contradiction. If 
$(n_k(s))_{s\in\mathcal{S}}$ is not optimal, there exists an optimal 
solution $(n(s))_{s\in\mathcal{S}}$ that is strictly better, i.e.,
\[
    \max_{s\in\mathcal{S}} \frac{\Lambda(s)}{n(s)}
    < \max_{s\in\mathcal{S}} \frac{\Lambda(s)}{n_k(s)}.
\]
Without loss of generality, we may take that optimum 
$n(\cdot)$ for which
\[
\sum_{s\in\mathcal{S}} |n(s) - n_k(s)|
\]
is smallest. Since 
$\sum_s n(s) = \sum_s n_k(s) = K$, 
it follows that there exist $s$ and $s^\prime$ in 
$\mathcal{S}$ such that $n_k(s) > n(s)$ and 
$n_k(s^\prime) < n(s^\prime)$.  Now, construct a new solution 
$\tilde n$ from $n$ by adding a resource to $s$ and withdrawing 
one from $s^\prime$ and note that $\tilde n(\cdot)$ satisfies the 
constraints and has smaller $\ell_1$-distance to $n_k(\cdot)$. 
Returning to $n$, observe that
\[
\frac{\Lambda(s^\prime)}{n(s^\prime)-1} \leq 
\frac{\Lambda(s^\prime)}{n_k(s^\prime)}
\mbox{ and }
\frac{\Lambda(s)}{n_k(s)-1} \leq \frac{\Lambda(s)}{n(s)}.
\]
Since 
\[
\frac{\Lambda(s^\prime)}{n_k(s^\prime)} \leq 
\frac{\Lambda(s)}{n_k(s)-1},
\]
as  the greedy algorithm has selected $n_k(s)$ but not
$n_k(s^\prime) + 1 $,  we obtain
\begin{equation}
\frac{\Lambda(s^\prime)}{n(s^\prime)-1} \leq
\frac{\Lambda(s)}{n(s)}. 
\label{e:s-sprime}
\end{equation}
We claim that 
\begin{equation}
\max_{t\in\mathcal{S}} \frac{\Lambda(t)}{\tilde n(t)}
\leq \max_{t \in \mathcal{S} } \frac{\Lambda(t)}{n(t)}.
\label{e:maxT}
\end{equation}
By (\ref{e:s-sprime}), 
\[
\frac{\Lambda(s^\prime)}{\tilde n(s^\prime)}=
\frac{\Lambda(s^\prime)}{n(s^\prime)-1} \leq
\frac{\Lambda(s)}{n(s)} \leq \max_{t\in\mathcal{S}} \frac{\Lambda(t)}{n(t)}.
\]
Also
\[
\frac{\Lambda(s)}{\tilde n(s)} = \frac{\Lambda(s)}{n(s)+1} \leq
\frac{\Lambda(s)}{n(s)} \leq  \max_{t \in \mathcal{S}} \frac{\Lambda(t)}{n(t)}.
\]
Since for $t \not \in \{ s, s^\prime \}$, $\tilde n(t) = n(t)$,
claim (\ref{e:maxT}) holds. Thus, $\tilde n(\cdot)$ is at least as good 
as $n$ but with strictly smaller $\ell_1$-distance to $n_k$, which 
leads to a contradiction.
\end{proof}

\section{Personnel allocation}
\label{S:crew}

In the previous section, we discussed how to assign vehicles, 
e.g., fire trucks or ambulances, to regions. In practice, these vehicles 
require a crew of some fixed size  $\alpha  \in \{ 2, 3, \dots \}$. 
Assume that the stations $s \in \mathcal{S}$ have been equipped with 
$n(s)$ vehicles. In order to distribute personnel over stations,
write $f(s)$ for the number of people assigned to $s$. With these
staff, $\lfloor f(s)/\alpha \rfloor$ vehicles can be operated. 
Therefore, the effective risk per vehicle for station $s$ is 
\begin{equation}
\frac{\Lambda(s)}  { \lfloor f(s)/\alpha \rfloor }
\label{e:percrew}
\end{equation}
when the nearest integer $ \lfloor f(s)/\alpha \rfloor$ smaller 
than or equal to $f(s)/\alpha$ is less than or equal to $n(s)$. 
Otherwise, all vehicles are operational and the risk per vehicle 
is  ${\Lambda(s)} / n(s) $ as before. In the sequel, we shall work 
under the assumption of shortage, while maintaining the requirement 
that each catchment area $C(s)$ is served by at least one operational 
vehicle, that is, 
\[
 \alpha |\mathcal{S}| \leq \sum_{s\in \mathcal{S}} f(s) 
 = K < \alpha \sum_{s\in \mathcal{S}} n(s),
\]
where $K$ is the total amount of resources (crew members).

The considerations above lead to the following optimisation problem.
Find
\begin{equation}
\min\max_{s \in \mathcal{S}} \frac{\Lambda(s)}
{ \lfloor f(s)/\alpha \rfloor}
\label{e:humans}
\end{equation}
over all $f: \mathcal{S} \to \mathbb{N}$ such that, for all
$s \in \mathcal{S}$,
\[ 
\left\{ \begin{array}{l}
f(s) \geq \alpha,\\
\sum_{s\in\mathcal{S}} f(s) = K ,\\
f(s) \leq \alpha \, n(s).
\end{array} \right.
\]
The station-dependent last constraint is required to avoid 
assigning resources to stations without sufficient equipment.

Note that for each station there will be ties in the average risk 
function (\ref{e:percrew}) that have to be dealt with 
properly in a greedy allocation algorithm. The following modification of
Algorithm ~\ref{A:pumper} can be used to solve~(\ref{e:humans}).

\begin{algorithm}
Initialise by allocating $\alpha$ resource units to every station,
i.e. $f_{\alpha |\mathcal{S}|}(s) = \alpha$ for all $s\in \mathcal{S}$.
At iteration $k$, after $k$ resources have already been 
allocated according to $(f_k(s))_{s\in\mathcal{S}}$, 
add one to a station $s^*$ for which 
\[
s^* = \arg\max_{ f_k(s) < \alpha n(s)} 
  \frac{\Lambda(s)}{ \lfloor f_k(s)/ \alpha \rfloor}
\]
to obtain 
\[
\left\{ \begin{array}{llll}
    f_{k+1}(s) & = & f_k(s) + 1,  & \mbox{ if } s = s^*, \\
    f_{k+1}(s) & = & f_k(s), & \mbox{ if } s\neq s^*.
\end{array} \right.
\]
In case of ties, preference is given to those stations
$s$ for which $f_k(s) \, \rm{mod} \, \alpha$ is largest.
\label{A:men}
\end{algorithm}

A few remarks on the tie-break method are in order. 

\begin{remark}
{\rm Set $\alpha = 6$ and $K=18$ so that three vehicles can 
be taken into operation. Suppose that there are two stations, 
$s_1$ and $s_2$, each equipped with two vehicles ($n(s_1) 
= n(s_2) = 2$), whose catchments areas $C(s_1)$ and $C(s_2)$ 
are equally risky with $\Lambda(s_1) = \Lambda(s_2) = 6$.
In the first iteration of Algorithm~\ref{A:men} after 
initialising $f_{12}(s_1) = f_{12}(s_2) = 6$, 
one of the stations is allocated an extra resource, say $s_1$. 
Then, because of the preference condition, also in the next 
iterations, resources are allocated to $s_1$ so that 
at the end of the algorithm 
$\Lambda(s_1)/ \lfloor f_{18}(s_1)/6 \rfloor = 3$ and 
$\Lambda(s_2)/ \lfloor f_{18}(s_2)/6 \rfloor = 6$. Without
the preferential allocation to $s_1$, a resource
could also be allocated to $s_2$ leading to 
$\Lambda(s_1)/ \lfloor f_{18}(s_1) /6 \rfloor = 6$ and 
$\Lambda(s_2)/ \lfloor f_{18}(s_2) /6  \rfloor = 6$.
This allocation has the same maximal risk per crew, but 
does not use capacity efficiently.} $\hfill\Box$
\end{remark}

We proceed to show that Algorithm~\ref{A:men} solves the 
optimisation problem (\ref{e:humans}).

\begin{theorem} \label{t:personnel}
Suppose that $n(s) \geq 1$ and $\Lambda(s) > 0$ for all $s\in \mathcal{S}$.
Let $\alpha |\mathcal{S}| \leq K < \alpha \sum_{s\in \mathcal{S}} n(s)$.
Then Algorithm~\ref{A:men} with $k=K$ returns a decision rule $f(s)$,
$s\in \mathcal{S}$ that minimises the maximal average risk (\ref{e:humans}) 
and satisfies the constraints that $\alpha \leq f(s) \leq \alpha n(s)$ 
for all $s\in \mathcal{S}$ and that $\sum_{s\in \mathcal{S}} f(s) = K$.
\end{theorem}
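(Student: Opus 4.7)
The plan is to closely mirror the argument for Theorem~\ref{t:cars}, with the extra bookkeeping needed because the denominator in the objective is $\lfloor f(s)/\alpha\rfloor$ rather than $n(s)$ itself. First I would verify the constraints by construction: the initialisation gives $f_k(s)\geq\alpha$; the algorithm only augments stations with $f_k(s)<\alpha n(s)$, which preserves $f(s)\leq\alpha n(s)$ throughout (using $\alpha|\mathcal{S}|\leq K<\alpha\sum_s n(s)$ so that at every iteration at least one eligible station exists); and $K-\alpha|\mathcal{S}|$ iterations after initialisation deliver $\sum_s f_K(s)=K$.

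For optimality I would proceed by contradiction. Assume some optimal $f$ strictly improves on $f_K$, and among all such optima pick one minimising $\sum_{s}|f(s)-f_K(s)|$. Because both sum to $K$, there exist $s,s'\in\mathcal{S}$ with $f_K(s)>f(s)$ and $f_K(s')<f(s')$. Define $\tilde f$ from $f$ by adding one resource at $s$ and removing one at $s'$. The constraints still hold since $\tilde f(s)\leq f_K(s)\leq\alpha n(s)$ and $\tilde f(s')\geq f_K(s')\geq\alpha$, and by construction $\tilde f$ is strictly closer to $f_K$ in $\ell_1$ than $f$.

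The crux is to show that $\tilde f$ is no worse than $f$ in the minimax objective. On $t\notin\{s,s'\}$ they coincide. At $s$, monotonicity of $\lfloor\cdot/\alpha\rfloor$ gives $\Lambda(s)/\lfloor\tilde f(s)/\alpha\rfloor\leq\Lambda(s)/\lfloor f(s)/\alpha\rfloor$. At $s'$ I would invoke the greedy rule: consider the iteration $k$ at which Algorithm~\ref{A:men} first pushed $s$ past $f(s)$, so $f_k(s)=f(s)$ and $f_k(s')\leq f_K(s')\leq f(s')-1<\alpha n(s')$ (making $s'$ eligible). Since greedy chose $s$ over $s'$,
\[
\frac{\Lambda(s')}{\lfloor f_k(s')/\alpha\rfloor}\;\leq\;\frac{\Lambda(s)}{\lfloor f_k(s)/\alpha\rfloor}\;=\;\frac{\Lambda(s)}{\lfloor f(s)/\alpha\rfloor},
\]
and because $f_k(s')\leq f(s')-1$ implies $\lfloor f_k(s')/\alpha\rfloor\leq\lfloor(f(s')-1)/\alpha\rfloor$, we conclude
\[
\frac{\Lambda(s')}{\lfloor\tilde f(s')/\alpha\rfloor}\;=\;\frac{\Lambda(s')}{\lfloor(f(s')-1)/\alpha\rfloor}\;\leq\;\frac{\Lambda(s')}{\lfloor f_k(s')/\alpha\rfloor}\;\leq\;\max_{t\in\mathcal{S}}\frac{\Lambda(t)}{\lfloor f(t)/\alpha\rfloor}.
\]
Hence $\tilde f$ matches $f$ on the objective while being closer to $f_K$, contradicting minimality.

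The main obstacle I anticipate is the floor operation: unlike in Theorem~\ref{t:cars}, a unit swap at $s'$ may leave $\lfloor f(s')/\alpha\rfloor$ unchanged (cheap case) or collapse it by one (the delicate case where $\alpha\mid f(s')$), and symmetrically at $s$. The chain of inequalities above is designed to handle both cases uniformly through the monotonicity of the floor. The tie-breaking rule, which governs Remark~1 behaviour, does not enter the optimality argument itself, since any valid greedy pick yields the critical $\geq$ comparison; it merely selects among equally optimal solutions.
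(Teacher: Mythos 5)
Your proof is correct, but it takes a genuinely different route from the paper's. The paper first normalises both allocations before exchanging anything: it invokes the tie-breaker rule to argue that the greedy output has at most one station with slack (of size $K-\alpha\lfloor K/\alpha\rfloor$), shows by a reallocation argument that the competing optimum $f$ may also be assumed to have total slack below $\alpha$, strips the slack from both, and only then runs the exchange argument of Theorem~\ref{t:cars} in blocks of $\alpha$ units, so that every count is an exact multiple of $\alpha$ and the floor functions disappear. You instead keep the floors and perform a single-unit swap directly on $f$ and the greedy output, absorbing the two cases at $s'$ (floor unchanged versus floor dropping by one when $\alpha\mid f(s')$) into the monotonicity of $\lfloor\cdot/\alpha\rfloor$ combined with the greedy comparison at the iteration where $s$ is pushed from $f(s)$ to $f(s)+1$; the inequality $\lfloor f_k(s')/\alpha\rfloor\leq\lfloor(f(s')-1)/\alpha\rfloor$, which follows from $f_k(s')\leq f_K(s')\leq f(s')-1$, is exactly what renders the delicate case harmless, and your eligibility check $f_k(s')\leq f_K(s')<f(s')\leq\alpha n(s')$ is the one point that must not be skipped. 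Your route is shorter, avoids the paper's somewhat informal ``repeat the slack reallocation until it is small'' reduction, and makes explicit that the tie-breaker plays no role in minimax optimality (it only addresses the capacity-efficiency issue illustrated in the remark following Algorithm~\ref{A:men}); the paper's route buys a tighter structural parallel with Theorem~\ref{t:cars} and, as a by-product, a description of how the leftover $K-\alpha\lfloor K/\alpha\rfloor$ units are placed in the greedy output.
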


\begin{proof}{}
The greedy algorithm is initialised with 
$f_{\alpha |\mathcal{S}}(s) = \alpha$ and 
continues until $K$ resources have been allocated. 
Additionally, in each step, the addition of one resource
is constrained to $f_k(s) < \alpha  n(s)$.
Hence  the constraints are satisfied. 

The optimality proof proceeds by contradiction.  If $(f_k(s))_{s\in\mathcal{S}}$
is not optimal, there exists an optimal solution $(f(s))_{s\in\mathcal{S}}$ that is strictly better, i.e.,
\[
    \max_{s\in\mathcal{S}} \frac{\Lambda(s)}
    { \lfloor f(s) / \alpha \rfloor }
    < \max_{s\in\mathcal{S}} \frac{\Lambda(s)}
     { \lfloor f_k(s) / \alpha \rfloor }.
\]

By the tie-breaker rule, the 
output of Algorithm~\ref{A:men}, $f_k$, contains at 
most one station with slack, which has size
$K - \alpha\lfloor K/\alpha \rfloor \leq \alpha -1$,
and for all other stations, $f_k$ is a multiple of $\alpha$. 

For $f$, if the combined slack is $\alpha$ or more, pick any 
$s_0 = \arg\max_s \Lambda(s) /$ $ \lfloor 
( f(s) / \alpha ) \rfloor$ with 
$\alpha \lfloor f(s_0) / \alpha \rfloor + \alpha \leq n(s_0)$.
If the maximiser $s_0$ is unique, 
allocating units out of the slack to station $s_0$ to 
fill up an additional vehicle 
would lead to a smaller maximal risk per crew, thus 
contradicting the assumption that $f$ is optimal. In case
of a tie, this action would leave the maximal risk per crew 
unchanged and we repeat this procedure
until the total slack is smaller than 
$\alpha$ or no maximiser $s_0$ can accommodate an extra crew.
In the latter case, filling up  remaining stations to
full capacity  as long as there at least $\alpha$ units
left, does not change the maximal risk per vehicle and
the procedure terminates because of the scarcity assumption
$K < \alpha  \sum_{s} n(s)$. 

In conclusion, we may assume without loss 
of generality that also in $f$ the total slack is 
$K - \alpha \lfloor K/\alpha \rfloor \leq \alpha - 1$. 
Set $\tilde K = \alpha \lfloor K/ \alpha \rfloor$. Then 
removing the slack in both $f_k$ and $f$, the remnants 
$\tilde f_k$ and $\tilde f$ now satisfy 
$\sum_s \tilde f(s) = \sum_s \tilde f_k(s) = \tilde K$.
The risks per crew remain unchanged, so $\tilde f$ 
is optimal. Moreover, all $\tilde f(s)$ and $\tilde f_k(s)$ are 
multiples of $\alpha$. Without loss of generality, we may 
now choose $\tilde f$ such that 
\[
\sum_{s\in \mathcal{S}} \left|
 \frac{\tilde f(s)}{\alpha} 
 - \frac{\tilde f_k(s)}{\alpha} 
\right|
\]
is smallest. 

The proof now proceeds along the same line as that of 
Theorem~\ref{t:cars}. If $\tilde f$ and $\tilde f_k$ are not 
the same, there exist $s$ and $s^\prime$ in $\mathcal{S}$
such that $\tilde f_k(s) > \tilde f(s)$ and 
$\tilde f_k(s^\prime) < \tilde f(s^\prime)$. 
We can therefore create a new solution $\hat f$ from 
$\tilde f$ by adding $\alpha$ resources to $s$ and 
withdrawing $\alpha$ units from $s^\prime$. Note that 
$\hat f(\cdot)$ satisfies the constraints and has
smaller $\ell_1$-distance to $\tilde f_k(\cdot)$.  

Observe that
\[
\frac{\Lambda(s^\prime)}
    { ( \tilde f(s^\prime)-\alpha) / \alpha }
    \leq \frac{\Lambda(s^\prime)}
     { \tilde f_k(s^\prime) / \alpha }
\mbox{ and }
   \frac{\Lambda(s)}
   { (\tilde f_k(s) -\alpha)/ \alpha}
   \leq  \frac{\Lambda(s)}
    {\tilde f(s) / \alpha }.
\]
Since the greedy algorithm has selected $\tilde f_k(s)$ but not
$\tilde f_k(s^\prime) + \alpha$,
\[
\frac{\Lambda(s^\prime)}
    { \tilde f_k(s^\prime) / \alpha  }
     \leq  \frac{\Lambda(s)}
    { (\tilde f_k(s) - \alpha)/ \alpha  }
\]
and therefore
\[
\frac{\Lambda(s^\prime)}
    {  ( \tilde f(s^\prime)-\alpha) / \alpha  }
    \leq \frac{\Lambda(s)}
    {  \tilde f(s) / \alpha  }.
\]
Hence 
\[
\frac{\Lambda(s^\prime)}
    { \lfloor \hat f(s^\prime) / \alpha \rfloor } =
\frac{\Lambda(s^\prime)}
    {  ( \tilde f(s^\prime)-\alpha) / \alpha } \leq 
    \frac{\Lambda(s)}
    {  \tilde f(s) / \alpha }
    \leq \max_{t\in \mathcal{S}} \frac{\Lambda(t)}
    { \lfloor \tilde f(t) / \alpha \rfloor }
\]
and
\[
\frac{\Lambda(s)}
    { \lfloor \hat f(s) / \alpha \rfloor } =
\frac{\Lambda(s)}
    {  ( \tilde f(s)+ \alpha) / \alpha} \leq 
    \frac{\Lambda(s)}
    {  \tilde f(s) / \alpha }
    \leq \max_{t\in \mathcal{S}} \frac{\Lambda(t)}
    { \lfloor \tilde f(t) / \alpha \rfloor },
\]
leading to a contradiction with the assumption that $\tilde f$
is an optimal allocation rule closest to $\tilde f_k$. The proof is complete.
\end{proof}

\section{Capacity planning for fire services in Twente}
\label{S:Twente}

In the security region Twente, The Netherlands, there are 
29 fire stations, the locations of which are plotted as the 
black dots in the left panel of Figure~\ref{F:risk2004}. 
For the catchment areas, we use the Voronoi cells. In other
words, station $s$ serves all locations in Twente that are 
closer to $s$ than to any other station. 

\begin{figure}[thb]
\begin{center}
\includegraphics[width=2.7in]{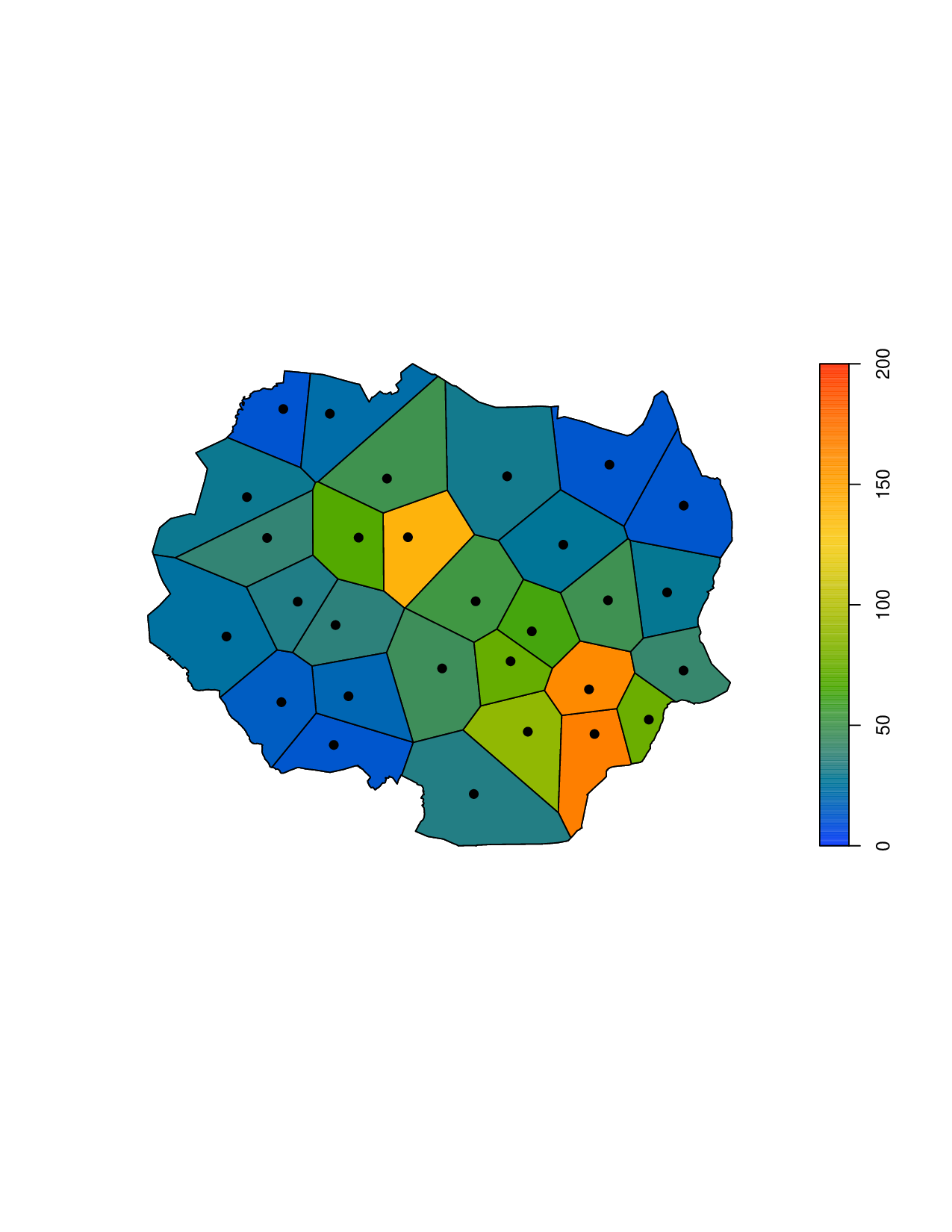}
\includegraphics[width=2.7in]{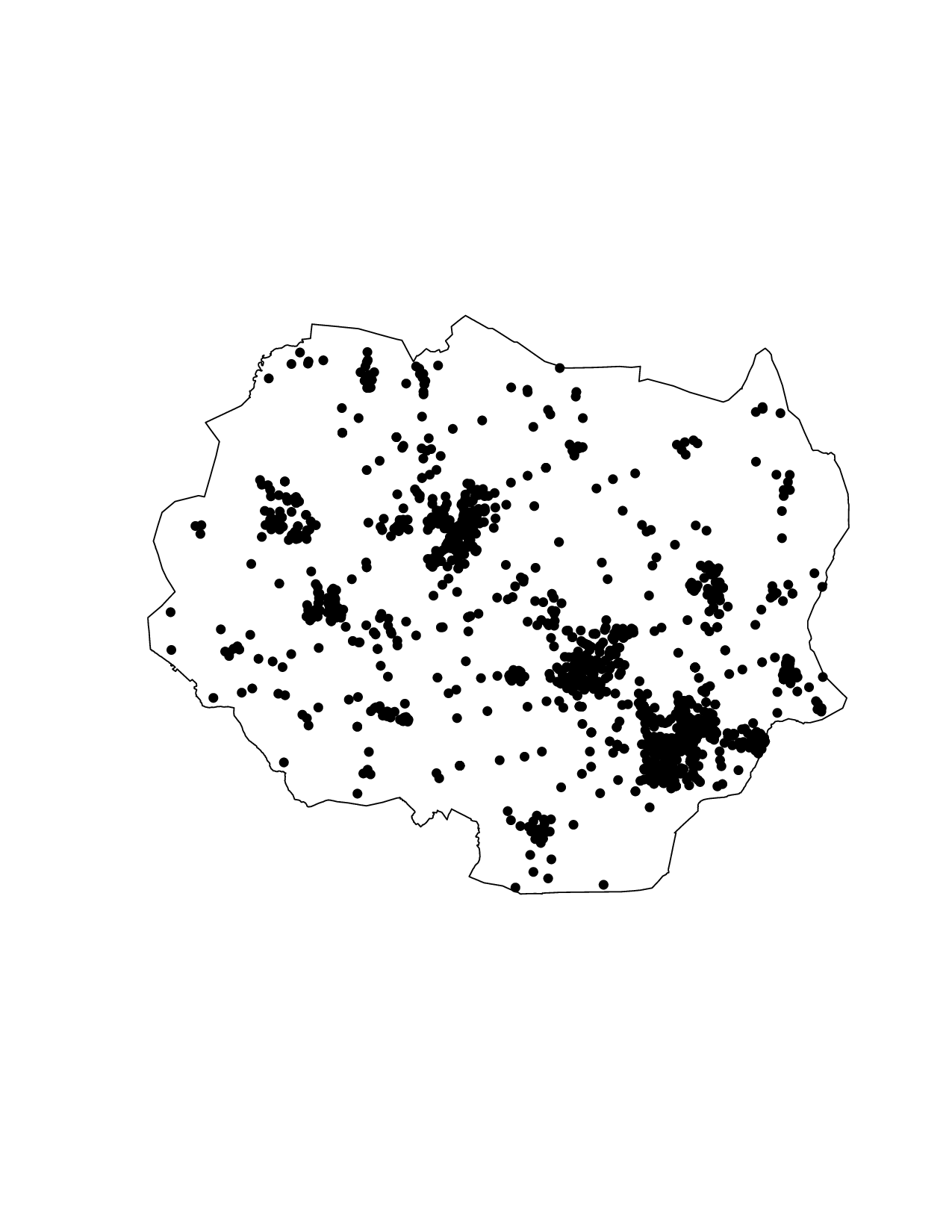}
\end{center}
\caption{Left: Voronoi cells around $29$ fire stations
in Twente coloured by risk. 
Right: mapped incidents that occurred in the year 2004.}
\label{F:risk2004}
\end{figure}

To estimate the $\Lambda(s)$, Twente Fire Brigade provided 
us with a map of incidents that occurred during the year 
2004 as shown in the right panel of Figure~\ref{F:risk2004}.
Note that the pattern is quite heterogeneous: many incidents
occur along a west-east diagonal that contains the three 
major cites in the region, there are fewer in the surrounding 
rural areas. Thus, we use an adaptive kernel estimator
(\ref{e:Abramson}), cf.\ Section~\ref{S:intensity}.
For the pilot estimate, the bandwidth is selected using 
cross-validation, that is, by maximising (\ref{e:bwPPL}).
The bandwidth in (\ref{e:Abramson}) is selected by minimising
(\ref{e:HamFun}). Finally, $\hat \lambda$ is integrated over 
the catchment areas to obtain the risks $\Lambda(s)$ shown
in Figure~\ref{F:risk2004} and tabulated for the most risky
regions in Table~\ref{tab:TwenteTable}. 


The last column in Table~\ref{tab:TwenteTable} lists the 
order in which Algorithm~\ref{A:pumper} assigns fire trucks
to stations up to $K=54$. Each stations receives its statutory 
single truck. After that, the first extra truck is assigned 
to the station serving the highest risk, $s_1$, located in the most 
densely populated city. The second truck goes to another 
station, $s_2$, in the same city, the third to the second city in 
Twente. After $K$ trucks have been allocated, the maximal risk 
per vehicle is $36.2$.

Let us truncate at $K= 39$ so that each station has at least 
one fire truck,  stations $s_1$, $s_2$ and $s_3$ have three, 
whilst $s_4$, $s_5$, $s_6$ and $s_7$ 
all have two trucks at their disposal. The next step is 
to allocate personnel. 
Now, a full crew consists of six people, so $\alpha = 6$. 
Table~\ref{tab:TwenteCrew} lists the index in which crew
members are assigned to stations by Algorithm~\ref{A:men}
for $K = 200$ after the 
$174$ members of the crews for each station have been 
allocated.
The first extra crews are assigned to $s_1$ and $s_2$ 
in the main city.
Stations $s_3$ and $s_4$
also receive an extra crew,
and two firefighters are assigned to $s_1$ to act as spares.
Observe that only $s_4$
is able to operate at full capacity. 

    \begin{table}[thb]
        \centering
        \begin{tabular}{|c|r|r|} \hline
        Station $s$ & $\Lambda(s)$ & Index allocated vehicles  \\
        \hline
        $s_1 $&  168.9 & 1, 5, 12, 20 \\
        $s_2$ &  163.2 & 2, 6, 22, 13 \\
        $s_3$ & 141.4 & 3, 9, 17 \\
        $s_4$ &  87.8 & 4, 19 \\
        $s_5$ & 75.3 & 7, 24 \\
        $s_6$ & 73.8 & 8, 25 \\
        $s_7$ & 68.6 & 10 \\
        $s_8$ & 63.9 & 11 \\
        $s_9$ & 52.8 & 14\\
        $s_{10}$ & 49.2 & 15\\
        $s_{11}$ & 48.5 & 16 \\ 
        $s_{12}$ & 46.2 & 18 \\
        $s_{13}$ & 41.4 & 21 \\
        $s_{14}$ & 38.4 & 23 \\
        \hline
        \end{tabular}
        \caption{Partial list of stations $s$ ordered according
        to risk $\Lambda(s)$. The last columns indicates the 
        order in which the greedy algorithm assigns vehicles to
        stations.}
        \label{tab:TwenteTable}
    \end{table}


   \begin{table}[hbt]
        \centering
        \begin{tabular}{|c|r|r|r|} \hline
        Station $s$ & $\Lambda(s)$ & n(s) &
           Index allocated firefighter  \\
        \hline
        $s_1$ &  168.9 &  3 & 1--6, 25, 26 \\
        $s_2$ &  163.2 &  3 & 7--12\\
        $s_3$ & 141.4 &  3 &  13--18 \\
        $s_4$ &  87.8 & 2 & 19--24 \\
        $s_5$ & 75.3 & 2 &      \\
        $s_6$ & 73.8 & 2 &  \\
        $s_7$ & 68.6 & 2 &  \\ \hline
       \end{tabular}
        \caption{Partial list of stations $s$ ordered according
        to risk $\Lambda(s)$. The third columns lists the 
        number, $n(s)$, of available fire trucks and the last columns 
        indicates the order in which the greedy algorithm assigns 
        firefighters to stations.}
        \label{tab:TwenteCrew}
    \end{table}

\section{Discussion}

In this paper, we combined tools from spatial statistics and
integer programming to solve resource allocation problems 
encountered in the security domain. We illustrated the approach
on data obtained from the Twente Fire Brigade. 

There are ample opportunities for further research.
So far, we assumed that vehicles and crew members are 
indistinguishable and can be allocated everywhere. 
In practice, this may not be the case. For example the Twente
Fire Brigade works with professionals as well as with volunteers.
The latter are usually tied to a specific station, the former
may have different roles and expertise.  Also, there are 
different types of vehicles, for instance trucks equipped 
with high ladders or trucks specifically designed for wildfires. 
Clearly, the countryside has different needs than an urban 
environment. Finally, there may be scheduling restrictions due 
to training and legal obligations.

\bigskip 

\noindent
{\bf Acknowledgements}: Thanks to P.\ Visscher
and R.\ de Wit from Twente Fire Brigade for access to data and 
many helpful discussions.

\bigskip


\begin{thebibliography}{99}

\bibitem[Abramson(1982)]{Abra82}
Abramson IA (1982)
On bandwidth variation in kernel estimates -- A square root law.
Ann Statist 10:1217--1223

\bibitem[Barr \& Schoenberg(2010)]{BarrScho10}
Barr CD, Schoenberg FP (2010)
On the Voronoi estimator for the intensity of an inhomogeneous
planar Poisson process. 
Biometrika 97:977–984

\bibitem[Biscio \& Lavancier(2025)]{BiscLava25}
Biscio CAN, Lavancier F (2025)
Nonparametric intensity estimation of spatial point
processes by random forests.
Arxiv 2511.09307

\bibitem[Chiu et al.(2013)]{SKM}
Chiu SN, Stoyan D, Kendall WS, Mecke J (2013)
Stochastic geometry and its applications, third edition.
John Wiley \& Sons, Chichester
  
\bibitem[Cronie \& Van Lieshout(2018)]{CronLies18}
Cronie O, Lieshout MNM van (2018)
A non-model based approach to bandwidth selection for kernel
estimators of spatial intensity functions.
Biometrika 105:455--462

\bibitem[Cronie, Moradi \& Biscio(2023)]{Cronetal23}
Cronie O, Moradi M, Biscio CAN (2023)
A cross-validation-based statistical theory for point processes.
Biometrika 111:625--641

\bibitem[Davies, Flynn \& Hazelton(2018)]{Davietal18}
Davies TM, Flynn CR, Hazelton ML (2018)
On the utility of asymptotic bandwidth selectors for
spatially adaptive kernel density estimation. 
Statist Probab Lett 138: 75–81

\bibitem[Diggle(1985)]{Digg85}
Diggle PJ (1985)
A kernel method for smoothing point process data.
J Appl Stat 34:138--147

\bibitem[Du Rietz(1929)]{DuRi29}
Du Rietz GE (1929)
The fundamental units of vegetation. 
Proceedings of the International Congress
of Plant Science 1:623–627

\bibitem[Frederickson \& Johnson(1982)]{FredJohn82}
Frederickson GN, Johnson DB (1982)
The complexity of selection and ranking in $X+Y$ and
matrices with sorted columns.
J Comput Syst Sci 24:197--208

\bibitem[Fujishige(2005)]{Fuji05}
Fujishige S (2005)
Submodular functions and optimization, second edition. 
Elsevier, Amsterdam

\bibitem[Gross(1956)]{Gros56}
Gross O (1956)
A class of discrete type of minimization problems.
RM-1644, Rand Corporation. 

\bibitem[Gr\"otschel, Lov\'asz \& Schrijver(1993)]{Grotetal93}
Gr\"otschel M, Lov\'asz L, Schrijver A (1993)
Geometric algorithms and combinatorial optimization, second edition.
Algorithms and combinatorics, Vol 2.
Springer, Heidelberg


\bibitem[Guan(2008)]{Gua08}
Guan Y (2008)
On consistent nonparametric intensity estimation for 
inhomogeneous spatial point processes.
J Amer Statist Assoc 103:1238--1247

\bibitem[Katoh, Shioura \& Ibaraki(2025)]{Katoetal25}
Katoh N, Shioura A, Ibaraki T (2025)
Resource allocation problems.
In: P.M. Pardalos et al. (eds.), 
Handbook of combinatorial optimization.
Springer, New York

\bibitem[Kim, Asami \& Toda]{Kimetal22}
Kim H, Asami T, Toda H (2022)
Fast {B}ayesian estimation of point process 
intensity as function of covariates.
Proceedings NeurIPS 2022, pp.~25711--25724

\bibitem[Klein(1991)]{KleiLuss91}
Klein RS, Luss H (1991)
Minimax resource allocation with tree structured substitutable
resources. 
Oper Res 41:959--971

\bibitem[Koopman(1953)]{Koop53}
Koopman BO (1953)
The optimum distribution of effort. 
Oper Res 1:52--63

\bibitem[Van Lieshout(2012)]{Lies12}
Lieshout MNM van (2012)
On estimation of the intensity function of a point process.
Methodol Comput Appl Probab 14:567--578

\bibitem[Van Lieshout(2020)]{Lies20}
Lieshout MNM van (2020)
Inﬁll asymptotics and bandwidth selection for kernel estimators 
of spatial intensity functions.
Methodol Comput Appl Probab 22:995--1008

\bibitem[Van Lieshout(2021)]{Lies21}
Lieshout MNM van (2021)
Infill asymptotics for adaptive kernel estimators of spatial intensity.
Austral NZ J Stat 63:159--181

\bibitem[Van Lieshout(2024)]{Lies24}
Lieshout MNM van (2024)
Non‑parametric adaptive bandwidth selection for kernel 
estimators of spatial intensity functions.
Ann Inst Statist Math 76:313--331 

\bibitem[Lo(2017)]{Lo17}
Lo PH (2017)
An iterative plug-in algorithm for optimal bandwidth 
selection in kernel intensity estimation for spatial data.
PhD thesis, Technical University of Kaiserslautern

\bibitem[Loader(1999)]{Load99}
Loader C (1999)
Local regression and likelihood.
Springer, New York

\bibitem[Lu et al.(2025)]{Luetal25}
Lu C, Guan Y, Lieshout MNM~van, Xu G (2025)
XGBoostPP: Tree-based estimation of point process intensity functions.
J Comput Graph Statist, to appear

\bibitem[Luss(1992)]{Luss92}
Luss H (1992)
Minimax resource allocation problems: Optimization and parametric
analysis. 
European J Oper Res 60:76--86

\bibitem[Moradi et al.(2019)]{Moraetal19}
Moradi MM, Cronie O, Rubak E, Lachi\`eze--Rey R, Mateu J, 
Baddeley A (2019)
Resample-smoothing of Voronoi intensity estimators. 
Stat Comput 29:995–1010

\bibitem[Murota(1998)]{Muro98}
Murota K (1998)
Discrete convex analysis.
Math Program 83:313--371

\bibitem[Ord(1978)]{Ord78}
Ord JK (1978)
How many trees in a forest?
Math Sci 3:23--33

\bibitem[Schaap \& Van de Weygaert(2000)]{SchaWeyg00} 
Schaap WE, Weygaert R van de (2000)
Letter to the Editor. Continuous fields and discrete samples:
Reconstruction through Delaunay tessellations.
Astronom Astrophys 363:L29--L32

\bibitem[Schrijver(2003)]{Schr03}
Schrijver A (2003)
Combinatorial optimization: Polyhedra and efficiency.
Springer, Berlin
  
\end{thebibliography}
\end{document}